\newtheorem{theorem}{Theorem}[section]
\newtheorem{remark}{Remark}[section]
\newtheorem{lemma}[theorem]{Lemma}
\newcommand{\bt}{\begin{theorem}}
	\newcommand{\bl}{\begin{lemma}}
		\newcommand{\el}{\end{lemma}}
	\newcommand{\et}{\end{theorem}}
\newcommand{\bn}{\begin{eqnarray}}
	\newcommand{\en}{\end{eqnarray}}
\newcommand{\bnn}{\begin{eqnarray*}}
	\newcommand{\enn}{\end{eqnarray*}}
\newcommand{\ba}{\begin{aligned}}
	\newcommand{\ea}{\end{aligned}}
\newcommand{\be}{\begin{equation}}
	\newcommand{\ee}{\end{equation}}
\newcommand{\Bv}{{\boldsymbol{v}}}
\newcommand{\bBV}{\boldsymbol{V}}
\newcommand{\Bh}{{\boldsymbol{h}}}
\newcommand{\Bu}{{\boldsymbol{u}}}
\newcommand{\Bw}{{\boldsymbol{w}}}
\newcommand{\Be}{{\boldsymbol{e}}}
\newcommand{\Ps}{\mathbf{\Psi}}
\newcommand{\Bx}{{\boldsymbol{x}}}
\newcommand{\OR}{{\mathscr{O}}_{R}}
\newcommand{\DR}{{\mathcal{D}}_{R}}
\begin{document}
	
	\title[Liouville-type theorem]
	{Liouville-type theorem for   steady helically symmetric   MHD   system in $\mathbb{R}^{3}$}

	\author{Jingwen Han}
	\address{School of Mathematical Sciences, Shanghai Jiao Tong University, 800 Dongchuan Road, Shanghai, China}
	\email{hjw126666@sjtu.edu.cn}
%
%
%
%
%
%
%

	\begin{abstract}
	 We show that any bounded smooth helically symmetric solution $(\Bu, \Bh)$ in $\mathbb{R}^3$  must be  constant vectors. This is an extension of  previous result \cite[Theorem 1.1]{HWXAHE} from Navier-Stokes system to MHD system. The  proof relies on establishing  a Saint-Venant  type estimate to characterize the growth of Dirichlet integral of nontrivial solutions.
	\end{abstract}

	\keywords{Liouville-type theorem, steady MHD system, helical symmetry,   Saint-Venant  type estimate.}
	\subjclass[2010]{
	35B53,	  35B10,  35J67, 76W05,  76D03}

	
	\maketitle
	
	\section{Introduction and  Main Results}
	 In this paper, we are interested in the Liouville-type theorem for solutions to the  steady  magnetohydrodynamic (MHD) system in $\mathbb{R}^{3}$,
	\begin{equation}\label{eqsteadyMHD}
	\left\{ \ba
	& -\Delta \Bu + (\Bu \cdot \nabla )\Bu-(\Bh\cdot\nabla)\Bh + \nabla P  =0,  \\
	& -\Delta \Bh + (\Bu \cdot \nabla )\Bh-(\Bh\cdot\nabla)\Bu= 0,              \ \ \ \ \  &\quad \mbox{in}\ \mathbb{R}^{3},\\
	& \nabla \cdot \Bu=\nabla \cdot \Bh =0,    \\
	\ea \right.
	\end{equation}
 where the unknown function $\Bu=(u^1,u^2,u^3)$  is the velocity field, $\Bh=(h^1,h^2,h^3)$  is the magnetic field, $P$ is the pressure.   The MHD system describes the motion of electrically conducting fluids, such as plasmas. It is an indispensable nonlinear partial differential equations in nature.  One may refer to \cite{DDSMHDBOOK09}  for more background related to MHD system.
	
 As we all know, if $\Bh=0$, the MHD system \eqref{eqsteadyMHD} reduces to the steady Navier-Stokes system. The weak solution $\Bu$ is called D-solution to the Navier-Stokes system if the finite Dirichlet integral 
 \[
 \int_{\Omega}|\nabla\Bu|^{2}\,d\Bx<+\infty
 \]
holds, where $\Omega$ is $\mathbb{R}^{3}$ or some noncompact domains. Leray  \cite{LEJMPA33} proved the existence of D-solution in both bounded and exterior domains. A Liouville-type  problem \cite[Remark X. 9.4]{GAGP11} asks  whether the  D-solution is zero if the velocity at infinity is zero in  $\mathbb{R}^{3}$. There are some progress along this problem in whole space for example the two-dimensional  case and  axisymmetric three-dimensional without swirl  case were solved in \cite{GWASP78} and \cite{KPRJMFM15}, respectively. In the noncompact domain for example the slab $\mathbb{R}^{2}\times[0,1]$, it was proved in the slab domain supplemented  with the no-slip boundary conditions the D-solutions must be zero (cf.\cite{CPZZARMA20}). The authors also  obtained the Liouville-type theorem of axisymmetric D-solutions  in the periodic slab $\mathbb{R}^{2}\times \mathbb{T}$. 

Another important problem is classifying the $L^{\infty}$-bounded solution of the Navier-Stokes equations. The significance progress in \cite{KNSS09} indicated that any bounded two dimensional and three dimensional axisymmetric without swirl solutions in whole space must be constant.
In the $z$-periodic slab   $\mathbb{R}^{2}\times \mathbb{T}$, it is shown that the bounded axisymmetric solution must be trivial provided  $ru^{\theta}$ is bounded. In the slab  $\mathbb{R}^{2}\times[0,1]$, the Liouville-type theorems for bounded axisymmetric steady solutions have been established in \cite{aBGWX,HWXAHE}. In particular, it was proved in \cite{aBGWX} that the      axisymmetric   bounded solutions in  $\mathbb{R}^{2}\times \mathbb{T}$ must be trivial.

 Analogously to the Navier-Stokes system, 
we  consider the weak solution to the MHD system \eqref{eqsteadyMHD} with finite Dirichlet integral
\begin{equation}\label{MHDfinDinte}
	\int_{\Omega}|\nabla\Bu|^{2}+|\nabla\Bh|^{2}\,d\Bx<+\infty.
\end{equation}
For the MHD system in the whole spaces, it was proved  that the D-solutions are trivial under  additional assumptions (cf. \cite{DHS22Nonli,WWYWNONL19,CWDCDS16,LLNoDEA2021}).  As for the periodic slab  $\mathbb{R}^{2}\times \mathbb{T}$ domain,
Li and Pan proved the triviality of D-solutions provided ($\Bu$, $\Bh$) are  swirl-free or axisymmetric  in \cite{LPJMFM2019}. Later on, \cite{PJMP21,HWJZJMAA23} improved this result by assuming the axisymmetry of component. 
In \cite{HWJZJMAA23}, they also considered the bounded solutions in the periodic slab $\mathbb{R}^{2}\times \mathbb{T}$, which is an extension of 
\cite[Theorem 1.4]{aBGWX} from the Navier-Stokes system to the MHD system.  As far as we know,  it is difficult to generalize the 2D and 3D axisymmetry  without swirl bounded solutions cases  from the Navier-Stokes system  to the MHD system without other assumptions, due to the lack of the maximum principle. See, for example,  \cite{WJMFM21}.  However,  there are still lots of results concerning the Liouville-type theorems to steady MHD system, one may refer \cite{CWJDE21,CLWJNS22,CNYNonl2024,FWJMP2021,LLNBKMC2020,WZCPAA2023,WGJGA2024,SsAMSS19,YX20JMAA} and references therein.
 
  The focus of this paper is to obtain the Liouville-type theorem for helically symmetric  MHD flows. 
  To give the definition of the helical symmetry, we first denote the 
    rotation matrix $R_{\rho}$ by an angle $\rho$ around the $x_{3}$-axis, i.e.
  \[
  R_{\rho}=\left(
  \begin{array}{ccc}
  	\cos\rho  &      \sin\rho & 0 \\
  	-\sin\rho & 	\cos\rho & 0 \\
  	0         &            0 & 1
  \end{array}
  \right).
  \]
The helical symmetry group $\mathcal{G}_{\sigma}$ is a one-parameter group of isometries of $\mathbb{R}^{3}$,
	\begin{equation}\label{eqA2}
		\mathcal{G}_{\sigma}=
		\left\{
		S_{\rho}:\mathbb{R}^{3}
		\rightarrow
		\mathbb{R}^{3} \big|  \rho\in\mathbb{R}
		\right\},
	\end{equation}
	here the transformation $S_{\rho}$  is defined by
	\begin{equation}\label{eqA3}
		S_{\rho}(\Bx)=
		R_{\rho}(\Bx)
	     +\left(
		\begin{array}{c}
			0 \\
			0 \\
			\sigma \rho
		\end{array}
		\right),
	\end{equation}
 and the nonzero constant $\sigma$ is called the step or pitch, which denotes the translation along the $x_{3}$-axis. 
Helical symmetry is invariant under a one-dimensional helical symmetry group $\mathcal{G}_{\sigma}$ of rigid motions generated by a simultaneous rotation around a fixed axis and translation along the same  symmetric axis.
A smooth function $f(\Bx)$ and vector field $\Bu(\Bx)$ are helically symmetric if $f\left(S_{\rho}(\Bx)\right)=f(\Bx)$ and $\Bu(S_{\rho}(\Bx))= R_{\rho}\Bu(\Bx)$,  for all $\rho\in \mathbb{R}$, respectively. Hence any helically symmetric flow is  periodic in $x_{3}$-direction, with   a period $2\pi|\sigma|$. Without loss of generality, we will assume that $\sigma=\frac{1}{2\pi}$.
	
There are  many interesting works related to the helical fluid, see \cite{BLLN15,ETSIAM09,JLNJDE17,JMCLN18PHYD,LNZAMP17,LMNLTJDDE14}.  For the non-stationary Navier-Stokes system, the global existence and uniqueness of  strong helical solutions with helical initial data were initially established in \cite{MTLARMA90}. In \cite{BLNNT13}, the authors obtained the stability of the helical solutions. To the stationary Navier-Stokes system, the existence of helical solutions with helical external force and boundary value were shown in \cite{KLWSIAM22}.  
 The Liouville-type theorem shows  that the bounded smooth helical solutions to the steady Navier-Stokes system in $\mathbb{R}^{3}$ must be constant \cite{HWXAHE}. It should be mentioned that in \cite{GGNAMAS14} the existence, uniqueness and large time behavior of  three-dimensional helically symmetric MHD system are investigated.

 Our  main result is the Liouville-type theorem for bounded helically symmetric MHD flows.
	\begin{theorem}\label{th:01}
		Assume that  $(\Bu, \Bh)$ is a   bounded smooth helically symmetric solution to the MHD   system \eqref{eqsteadyMHD} in $\mathbb{R}^{3}$. Then $(\Bu, \Bh)$ must be  constant vectors of the form  $\Bu=\left(0, 0, C_{1}\right)$, $\Bh=\left(0, 0, C_{2}\right)$.
	\end{theorem}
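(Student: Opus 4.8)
The plan is to follow the Saint-Venant scheme used for the Navier-Stokes case in \cite{HWXAHE}, adapting it to handle the extra magnetic terms. First I would reduce the problem using helical symmetry: write the system in the helical coordinates $(r,\theta+\sigma^{-1}x_3, x_3)$ and note that the flow is determined by functions on a two-dimensional section, periodic in $x_3$ with period $2\pi|\sigma|=1$. Because the solution is bounded and smooth with the nonlinearities controlled, one expects the Dirichlet integral over a period cell intersected with a horizontal ball $B_R$ to be finite on bounded sets; set
\[
D(R)=\int_{B_R\times[0,1]}\big(|\nabla\Bu|^{2}+|\nabla\Bh|^{2}\big)\,d\Bx,
\]
and the goal is a Saint-Venant estimate showing $D(R)$ either vanishes identically or grows at least at a fixed algebraic rate; combined with a separately established a priori bound (from boundedness of $(\Bu,\Bh)$ plus local elliptic estimates, yielding at most a controlled growth), this forces $D\equiv 0$, hence $(\Bu,\Bh)$ constant, and then the divergence-free and symmetry constraints pin down the form $\Bu=(0,0,C_1)$, $\Bh=(0,0,C_2)$.

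Next I would carry out the core energy/Saint-Venant argument. Multiply the momentum equation by $\Bu\,\varphi_R$ and the magnetic equation by $\Bh\,\varphi_R$, where $\varphi_R$ is a cutoff in the horizontal variable supported on $B_{2R}$, equal to $1$ on $B_R$, and integrate over one period cell. Integration by parts produces $D(R)$ on the left, a cutoff-gradient error term of size $\tfrac1R\int_{(B_{2R}\setminus B_R)\times[0,1]}(|\nabla\Bu|+|\nabla\Bh|)(|\Bu|+|\Bh|+|P|)$, and the advective/Lorentz terms. Crucially, the MHD structure makes the quartic interactions cancel in the combined estimate: $(\Bh\cdot\nabla)\Bh\cdot\Bu$ from the first equation pairs with $(\Bh\cdot\nabla)\Bu\cdot\Bh$ from the second, and the $(\Bu\cdot\nabla)$ terms integrate to boundary contributions after using $\div\Bu=0$; so the bulk nonlinear terms reduce to surface integrals on $\partial B_{2R}\times[0,1]$ weighted by $|\Bu|^2,|\Bh|^2,|\Bu||\Bh|$ and the pressure. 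The pressure is handled as in the Navier-Stokes case: decompose $P$ via the equation $\Delta P=-\div\div(\Bu\otimes\Bu-\Bh\otimes\Bh)$, subtract its average on the annulus, and use a Bogovskii/Poincaré estimate on the period cell to bound the pressure contribution by $C(D(2R)-D(R))$ plus lower-order terms. Assembling, and using the uniform $L^\infty$ bound on $(\Bu,\Bh)$ to convert the surface terms into $R\cdot(\text{const})$ or to absorb them via interpolation against $D'(R)$, one arrives at a differential inequality of the form $D(R)\le C\,R^{a}\big(D'(R)\big)^{b}$ with exponents forcing, by the usual ODE comparison, either $D\equiv 0$ on $[R_0,\infty)$ or $D(R)\gtrsim R^{c}$ with $c>1$.

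Finally, the contradiction: I would show independently that $D(R)$ grows at most linearly in $R$ (indeed at most like a fixed constant times $R$, since the period in $x_3$ is fixed and $(\Bu,\Bh)$ are bounded, so by Caccioppoli-type local estimates $\int_{B_{R+1}\times[0,1]}(|\nabla\Bu|^2+|\nabla\Bh|^2)\le C\,R$). This contradicts the lower bound $D(R)\gtrsim R^{c}$ with $c>1$ unless $D\equiv 0$. Then $\nabla\Bu\equiv\nabla\Bh\equiv 0$, so $\Bu,\Bh$ are constant vectors; helical symmetry $\Bu(S_\rho\Bx)=R_\rho\Bu(\Bx)$ applied to a constant forces the horizontal components to vanish, giving $\Bu=(0,0,C_1)$, $\Bh=(0,0,C_2)$.

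\textbf{Main obstacle.} The delicate point is the pressure estimate on the helical period cell and the precise bookkeeping of the surface terms: one must verify that the Bogovskii-type constant for the (non-simply-connected) cell $B_{2R}\setminus B_R)\times[0,1]$ has the right scaling in $R$, and that the combined velocity-magnetic nonlinear terms genuinely cancel in the bulk rather than merely being bounded — this cancellation (absent in generic systems) is what keeps the differential inequality subcritical enough to close. Handling the $x_3$-periodicity correctly so that no secular growth creeps in from the helical translation is the part that requires the most care.
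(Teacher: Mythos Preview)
Your overall strategy---energy identity with a radial cutoff, Saint-Venant differential inequality, contradiction with an a priori growth bound on the Dirichlet integral---matches the paper's. But there is a concrete gap in how you control the cubic ``surface'' terms such as $\frac12\int(|\Bu|^2+|\Bh|^2)\,\Bu\cdot\nabla\varphi_R$ and $\int(\Bu\cdot\Bh)(\Bh\cdot\nabla\varphi_R)$. Your proposal to ``convert the surface terms into $R\cdot(\text{const})$'' using only the $L^\infty$ bound gives at best $Y(R)\le CR$, which is exactly the a priori upper bound and yields no contradiction. The alternative you mention, ``absorb them via interpolation against $D'(R)$'', is indeed what is needed, but you have not said how. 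The paper's mechanism is this: from the helical identity $\sigma\partial_z=\partial_\theta$ combined with $\div\Bu=0$ one gets $\int_0^1 r u^r\,dz=0$ (and likewise for $h^r$), hence a Poincar\'e inequality $\|u^r\|_{L^2(\OR)}\le C\|\partial_z u^r\|_{L^2(\OR)}$ on the unit-width annulus $\OR=(B_R\setminus B_{R-1})\times(0,1)$. This is precisely what converts the cubic terms into $CR^{1/2}[Y'(R)]^{1/2}$. Without this Poincar\'e step the inequality does not close, and your ``main obstacle'' paragraph misidentifies the difficulty: the MHD cross terms do not require any special bulk cancellation (they all integrate by parts to boundary terms using $\div\Bu=\div\Bh=0$); the genuinely delicate point is bounding $u^r$ and $h^r$ by gradients.

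Two further differences worth noting. First, the paper handles the pressure not via $\Delta P=-\div\div(\cdot)$ but by writing $\int(P-P_R)u^r r\,dr\,d\theta\,dz$, applying Bogovskii on the \emph{two-dimensional} unit square $D_R=(R-1,R)\times(0,1)$ to $ru^r$ (zero mean by the same helical identity), and then substituting the momentum equations for $\partial_r P$ and $\partial_z P$; the unit-width domain is what keeps the Bogovskii constant independent of $R$, whereas on your dyadic shell $(B_{2R}\setminus B_R)\times[0,1]$ the constant would pick up a factor of $R$. Second, the resulting inequality is $Y(R)\le C_1 Y'(R)+C_2 R^{1/2}[Y'(R)]^{1/2}$, which forces \emph{exponential} growth of $Y$, not merely $R^c$ with $c>1$; this is then contradicted by the linear bound coming from Lemma~\ref{le:pebu}.
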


Indeed, Theorem \ref{th:01} can be improved as follows.
\begin{remark}\label{remarkreq1}
	Assume that  $(\Bu, \Bh)$ is a   bounded smooth  solution to the MHD   system \eqref{eqsteadyMHD} in $\mathbb{R}^{3}$, if $\Bu$ is helically symmetric and $\Bh$ is periodic in $z$-variable, then $(\Bu, \Bh)$ must be  constant vectors. Furthermore, the constant vector $\Bu$ must be the form of $\left(0, 0, C\right)$.
\end{remark}

In terms of the  cylindrical coordinates $(r,\theta,z)$, which are defined as follows
\begin{equation}\label{eq6}
	\Bx=(x_{1}, x_{2}, x_{3})
	=(r\cos\theta, r\sin\theta,z),
\end{equation}
 the  component $\Bu$ and  $\Bh$ can be written as
\begin{equation}\label{eq7}
	\Bu=u^{r}(r,\theta,z)\Be_{r}
	   +u^{\theta}(r,\theta,z)\Be_{\theta}
	   +u^{z}(r,\theta,z)\Be_{z},\quad
		\Bh=h^{r}(r,\theta,z)\Be_{r}
	+h^{\theta}(r,\theta,z)\Be_{\theta}
	+h^{z}(r,\theta,z)\Be_{z},   
\end{equation}
where $u^{r}, u^{\theta}, u^{z}$ are called radial, swirl and axial velocity, respectively, with
\[
\Be_{r}=(\cos\theta, \sin\theta, 0), \ \ \ \Be_{\theta}=(-\sin\theta, \cos\theta, 0) \ \ \ \text{and} \ \ \
\Be_{z}=(0, 0, 1).
\]	
We say that $\Bu$ is axisymmetric, if $\Bu$ does not depedend on $\theta$, i.e. 
\[
	\Bu=u^{r}(r,z)\Be_{r}
+u^{\theta}(r,z)\Be_{\theta}
+u^{z}(r,z)\Be_{z}.
\]
With the aid of cylindrical coordinates, the momentum equation $\eqref{eqsteadyMHD}_{1}$  can be written as
{\scriptsize{\begin{equation}\label{eqA8}
	\left\{
\begin{aligned}
	&\left(u^r \partial_r + \frac{u^\theta}{r}  \partial_\theta + u^z \partial_z\right)u^r - \frac{(u^\theta)^2}{r} + \frac{2}{r^2} \partial_\theta u^\theta + \partial_r P =\left(h^r \partial_r + \frac{h^\theta}{r}  \partial_\theta + h^z \partial_z\right)h^r - \frac{(h^\theta)^2}{r}+ \left(\Delta_{r,\theta, z} - \frac{1}{r^2} \right) u^r,\\
	&\left(u^r \partial_r + \frac{u^\theta }{r}  \partial_\theta + u^z \partial_z\right)u^\theta +
	\frac{u^\theta u^r}{r} - \frac{2}{r^2} \partial_\theta u^r + \frac{1}{r} \partial_\theta P=\left(h^r \partial_r + \frac{h^\theta}{r}  \partial_\theta + h^z \partial_z\right)h^\theta +
	\frac{h^\theta h^r}{r}+  \left(\Delta_{r,\theta, z} - \frac{1}{r^2} \right) u^\theta,\\
	&\left(u^r \partial_r + \frac{u^\theta}{r} \partial_\theta + u^z \partial_z\right)u^z + \partial_z P =\left(h^r \partial_r + \frac{h^\theta}{r} \partial_\theta + h^z \partial_z\right)h^z +\Delta_{r,\theta, z}u^z,
	\end{aligned}
\right.
\end{equation}}}
where
\[
\Delta_{r,\theta,z}
=\partial^{2}_{r}+\dfrac{1}{r}\partial_{r}
+\dfrac{1}{r^{2}}\partial^{2}_{\theta}
+\partial^{2}_{z}.
\]
The divergence free condition $\eqref{eqsteadyMHD}_{3}$ is 
\begin{equation}\label{eqdivecylinc}
\partial_r u^r +\frac{1}{r}\partial_\theta u^\theta+\partial_z u^z +\frac{u^r}{r}=\partial_r h^r +\frac{1}{r}\partial_\theta h^\theta+\partial_z h^z +\frac{h^r}{r}=0. 
\end{equation}

Now we outline the key point of the proof. The strategy of proving Theorem \ref{th:01} is obtaining the Saint-Venant type estimates for the Dirichlet integral of nontrivial solutions, which making use of the helical identities \eqref{dhelicdef} and periodicity  of helically MHD flows and the Bogovskii map.	The Saint-Venant's principle was firstly used  in \cite{JKKTARMA66,RATARMA65} to study the solutions of the elasticity system. Then the authors in \cite{HWSIAM78,LSZNSL80}  utilized the Saint-Venant's principle to study the decay estimate and the the Liouville-type theorems for the pipe  flows. Recently,  it was successfully applied in the slab domain $\mathbb{R}^{2}\times [0,1]$ to study the Liouville-type theorems for  bounded axisymmetric Navier-Stokes solutions \cite{aBGWX}. There are many extensions in studying the Liouville-type theorems by  Saint-Venant type estimates, see \cite{aBYA,KTWarXi23,HWXAHE,HWJZJMAA23}.

The organization of the  paper is as follows.  In Section \ref{Sec2}, we will state some preliminaries. The proof of the main result will be given in Section \ref{Sec3}. In the Appendix, we will give a short note of Remark \ref{remarkreq1}.

	\section{Preliminaries}\label{Sec2}
	In this section, we give some preliminaries. First, we introduce the following notations. Assume that $\Omega$ is a bounded domain, define
	\[
	L^p_0(\Omega)=\left\{g: \ \ g\in L^p(\Omega), \ \ \int_\Omega g \, d\Bx =0 \right\}.
	\]
	For any $R\geq 2$, denote $D_R =(R-1, R)\times(0, 1)$,
 $\mathcal{D}_R =(R-1, R)\times(0, 2\pi)\times(0, 1)$,  $\Omega_R = B_R \times (0, 1)$ and $\OR = (B_R \setminus \overline{B_{R-1}}) \times (0, 1)$, where $B_R= \{(x_1,x_2)\in \mathbb{R}^2: x_1^2+x_2^2 <R^2\}$.

 Then, we introduce the Bogovskii map, which gives a solution to the divergence equation. The proof is due to  Bogovskii \cite{B79DANS}, see also  \cite[Section III.3]{GAGP11}  and  \cite[Section 2.8]{TT18}.
	 \begin{lemma}(\hspace{1sp}\cite[Lemma 2.1]{aBGWX})\label{Bogovskii}
		Let $\Omega$ be a bounded Lipschitz domain in $\mathbb{R}^n$ with $n\geq 2 $.  For any $q\in (1, \infty)$, there is a linear map $\boldsymbol{\Phi}$ that maps a scalar function $g\in L^q_0(\Omega)$ to a vector field $\bBV = \boldsymbol{\Phi} g \in W_0^{1, q}(\Omega; \mathbb{R}^n)$ satisfying
		\be \nonumber
		{\rm div}~\bBV = g \ \text{in}\ \Omega \quad \text{and} \quad \|\bBV\|_{W^{1, q}(\Omega)} \leq C (\Omega, q) \|g\|_{L^q(\Omega)}.
		\ee
In particular,
		\begin{enumerate}
			\item    For any $g \in L^2_0(D_R)$,
			the  vector valued function $\bBV = \boldsymbol{\Phi} g \in H_0^1(D_R; \mathbb{R}^2)$ satisfies
			\be \nonumber
			\partial_r V^r + \partial_z V^z =g \ \  \mbox{in}\,\, D_R
			\quad
			\text{and}
			\quad
			\|\tilde{\nabla } \bBV\|_{L^2(D_R)}
			\leq C \|g\|_{L^2(D_R)},
			\ee
			where $\tilde{\nabla } = (\partial_r, \ \partial_z ) $ and $C$ is a constant independent of $R$.
			
			\item  For any $g \in L^2(\mathcal{D}_R)$,
			the   vector valued function $\bBV = \boldsymbol{\Phi} g \in H_0^1( \mathcal{D}_R; \mathbb{R}^3)$ satisfies
			\be \nonumber
			\partial_r V^r + \partial_\theta V^\theta +  \partial_z V^z =g \ \   \mbox{in}\,\, \mathcal{D}_R
			\quad
			\text{and}
			\quad
			\|\bar{\nabla } \bBV\|_{L^2(\mathcal{D}_R)}
			\leq C \|g\|_{L^2( \mathcal{D}_R)},
			\ee
			where $\bar{\nabla} = (\partial_r, \ \partial_\theta, \ \partial_z ) $ and $C$ is a constant independent of $R$.
		\end{enumerate}
	\end{lemma}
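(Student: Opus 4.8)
The plan is to prove the general assertion by the classical Bogovskii construction and then read off parts (1) and (2) as special cases, exploiting that $D_{R}$ and $\DR$ are convex boxes that are mere translates of fixed reference domains. \textbf{Step 1 (explicit operator on star-shaped domains).} First I would treat the case in which $\Omega$ is star-shaped with respect to a ball $B=B(\Bx_{0},\ve)\subset\Omega$. Fixing $\omega\in C_{c}^{\infty}(B)$ with $\int_{B}\omega\,d\Bx=1$, I define, for $g\in L^{q}_{0}(\Omega)$,
\[
(\boldsymbol{\Phi} g)(\Bx)=\int_{\Omega}g(\By)\,\BG(\Bx,\By)\,d\By,
\qquad
\BG(\Bx,\By)=\frac{\Bx-\By}{|\Bx-\By|^{n}}\int_{|\Bx-\By|}^{\infty}\omega\!\left(\By+\xi\,\frac{\Bx-\By}{|\Bx-\By|}\right)\xi^{n-1}\,d\xi .
\]
A direct computation, integrating by parts in the radial variable $\xi$, yields the pointwise identity $\div(\boldsymbol{\Phi}g)=g-\omega\int_{\Omega}g\,d\By$, which reduces to $\div(\boldsymbol{\Phi}g)=g$ precisely because $g$ has zero mean; the support geometry forces $\boldsymbol{\Phi}g$ to vanish near $\partial\Omega$, so $\boldsymbol{\Phi}g\in W_{0}^{1,q}(\Omega;\mathbb{R}^{n})$, and linearity of $\boldsymbol{\Phi}$ is immediate from the formula.

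\textbf{Step 2 (the $W^{1,q}$ bound).} Differentiating under the integral sign, I would decompose $\nabla\boldsymbol{\Phi}g$ into a weakly singular part with locally integrable kernel, bounded on $L^{q}$ by Young's inequality, plus a principal-value operator whose kernel is homogeneous of degree $-n$ in $\Bx-\By$ and has vanishing average over spheres, i.e.\ a Calderón--Zygmund kernel. The Calderón--Zygmund theorem then gives $\|\nabla\boldsymbol{\Phi}g\|_{L^{q}(\Omega)}\le C(\Omega,q)\|g\|_{L^{q}(\Omega)}$ for every $q\in(1,\infty)$, and Poincaré's inequality (valid since $\boldsymbol{\Phi}g\in W_{0}^{1,q}$) upgrades this to the full norm bound $\|\boldsymbol{\Phi}g\|_{W^{1,q}(\Omega)}\le C(\Omega,q)\|g\|_{L^{q}(\Omega)}$.

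\textbf{Step 3 (general domains and the uniform constant).} For an arbitrary bounded Lipschitz $\Omega$ one covers it by finitely many subdomains star-shaped with respect to balls, splits $g$ by a subordinate partition of unity into pieces $g_{i}$ supported in $\Omega_{i}$, and redistributes mass along the overlaps so that each $g_{i}\in L^{q}_{0}(\Omega_{i})$; summing the local operators finishes the general case. For the lemma's two instances this patching is unnecessary: $D_{R}=(R-1,R)\times(0,1)$ and $\DR=(R-1,R)\times(0,2\pi)\times(0,1)$ are convex, hence star-shaped with respect to an interior ball, so Steps 1--2 apply directly, the zero-mean compatibility $\int g=0$ being exactly what makes the divergence equation solvable. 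Crucially, each $D_{R}$ is the translate of the reference rectangle $D_{2}=(1,2)\times(0,1)$ by $(R-2,0)$, and each $\DR$ is the translate of $\mathcal{D}_{2}$; since the Bogovskii estimate is invariant under rigid translations, the constant computed on the reference domain works for every $R\ge 2$, giving the $R$-independence asserted in (1) ($n=2$, gradient $\tilde\nabla$) and (2) ($n=3$, gradient $\bar\nabla$).

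I expect the two genuinely delicate points to be the Calderón--Zygmund step of Step 2 --- verifying that the singular part of $\nabla\boldsymbol{\Phi}$ is a bona fide Calderón--Zygmund operator so that the $L^{q}$ theory applies across the whole range $1<q<\infty$ --- and, in the general Lipschitz setting, the mass-transport bookkeeping of Step 3 that equips each local piece $g_{i}$ with zero mean while keeping $\sum_{i}\|g_{i}\|_{L^{q}(\Omega_{i})}\lesssim\|g\|_{L^{q}(\Omega)}$.
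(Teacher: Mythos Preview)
The paper does not give a proof of this lemma at all; it simply cites Bogovskii's original paper and the textbook treatments in Galdi and Tsai, and records the statement for later use. Your proposal is a correct outline of exactly the classical construction contained in those references --- the explicit integral operator on star-shaped domains, the Calder\'on--Zygmund bound for its gradient, and the partition-of-unity extension to Lipschitz domains --- together with the correct observation that the $R$-independence in parts (1) and (2) comes from the fact that $D_{R}$ and $\mathcal{D}_{R}$ are rigid translates of fixed reference rectangles, so the Bogovskii constant computed on $D_{2}$ (resp.\ $\mathcal{D}_{2}$) serves for every $R\ge 2$. There is therefore no genuine divergence of approach to discuss: you have supplied precisely the argument the paper delegates to the literature.
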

	
  Next, we give some important helical identities, which will be used frequently in the proof.
	\begin{lemma}\label{Leheliclfl}
		If the continuously differentiable velocity field $\Bu=u^{r}(r, \theta, z) \Be_{r}+
		u^{\theta}(r,\theta,z)\Be_{\theta}+
		u^{z}(r,\theta,z)\Be_{z}$ and the magnetic field $\Bh=h^{r}(r, \theta, z) \Be_{r}+
		h^{\theta}(r,\theta,z)\Be_{\theta}+
		h^{z}(r,\theta,z)\Be_{z}$ are helically symmetric, then there exists a constant $\sigma \in \mathbb{R}$  such that
		\begin{equation}\label{dhelicdef}
		    \sigma\partial_{z}\Bu
				=\partial_{\theta}\Bu,
		\qquad	\sigma\partial_{z}\Bh=\partial_{\theta}\Bh.
		\end{equation}
	\end{lemma}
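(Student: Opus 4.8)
The plan is to rewrite the two helical symmetry conditions in cylindrical coordinates and then differentiate in the group parameter $\rho$ at $\rho=0$. First I would record how $S_{\rho}$ acts in the coordinates \eqref{eq6}: from the explicit matrix one computes $R_{\rho}(r\cos\theta,r\sin\theta,z)^{\top}=(r\cos(\theta-\rho),r\sin(\theta-\rho),z)^{\top}$, and then \eqref{eqA3} adds $(0,0,\sigma\rho)$, so $S_{\rho}$ sends the point with cylindrical coordinates $(r,\theta,z)$ to the point with cylindrical coordinates $(r,\theta-\rho,z+\sigma\rho)$. Consequently, for a helically symmetric scalar $f$, the identity $f(S_{\rho}(\Bx))=f(\Bx)$ reads $f(r,\theta-\rho,z+\sigma\rho)=f(r,\theta,z)$ for all $\rho\in\mathbb{R}$; differentiating in $\rho$ and setting $\rho=0$ gives $-\partial_{\theta}f+\sigma\partial_{z}f=0$, i.e.\ $\sigma\partial_{z}f=\partial_{\theta}f$. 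This is the scalar version of \eqref{dhelicdef}, with $\sigma$ the same pitch fixed in \eqref{eqA3}.

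Next I would reduce the vector field case to this scalar statement. The point is that the cylindrical frame transforms neatly under $R_{\rho}$: directly from the matrix form of $R_{\rho}$ and the definitions of $\Be_{r},\Be_{\theta},\Be_{z}$ one has $R_{\rho}\Be_{r}(\theta)=\Be_{r}(\theta-\rho)$, $R_{\rho}\Be_{\theta}(\theta)=\Be_{\theta}(\theta-\rho)$ and $R_{\rho}\Be_{z}=\Be_{z}$. Writing $\Bu=u^{r}\Be_{r}+u^{\theta}\Be_{\theta}+u^{z}\Be_{z}$, the left side of $\Bu(S_{\rho}(\Bx))=R_{\rho}\Bu(\Bx)$ equals $u^{r}(r,\theta-\rho,z+\sigma\rho)\Be_{r}(\theta-\rho)+u^{\theta}(r,\theta-\rho,z+\sigma\rho)\Be_{\theta}(\theta-\rho)+u^{z}(r,\theta-\rho,z+\sigma\rho)\Be_{z}$, while the right side equals $u^{r}(r,\theta,z)\Be_{r}(\theta-\rho)+u^{\theta}(r,\theta,z)\Be_{\theta}(\theta-\rho)+u^{z}(r,\theta,z)\Be_{z}$. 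Matching the coefficients of the linearly independent vectors $\Be_{r}(\theta-\rho),\Be_{\theta}(\theta-\rho),\Be_{z}$ shows that each of $u^{r},u^{\theta},u^{z}$ is a helically symmetric scalar. Applying the scalar identity componentwise yields $\sigma\partial_{z}u^{r}=\partial_{\theta}u^{r}$, $\sigma\partial_{z}u^{\theta}=\partial_{\theta}u^{\theta}$, $\sigma\partial_{z}u^{z}=\partial_{\theta}u^{z}$; since $\Be_{r},\Be_{\theta},\Be_{z}$ do not depend on $z$, recombining these with the frame vectors gives $\sigma\partial_{z}\Bu=\partial_{\theta}\Bu$. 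The same argument applied verbatim to $\Bh=h^{r}\Be_{r}+h^{\theta}\Be_{\theta}+h^{z}\Be_{z}$ gives $\sigma\partial_{z}\Bh=\partial_{\theta}\Bh$, which completes the proof.

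There is no genuine analytic difficulty here; the argument is elementary bookkeeping, and the only step that needs care is keeping the sign conventions consistent — in particular verifying that the prescribed $R_{\rho}$ produces the rotation $\theta\mapsto\theta-\rho$ rather than $\theta\mapsto\theta+\rho$, so that the pitch $\sigma$ lands on the $\partial_{z}$ side of \eqref{dhelicdef} exactly as stated. Alternatively, one could differentiate the conditions directly in Cartesian coordinates using $\tfrac{d}{d\rho}S_{\rho}(\Bx)\big|_{\rho=0}=(x_{2},-x_{1},\sigma)^{\top}$ and $\tfrac{d}{d\rho}R_{\rho}\big|_{\rho=0}=\bigl(\begin{smallmatrix}0&1&0\\-1&0&0\\0&0&0\end{smallmatrix}\bigr)$, then convert back to the cylindrical frame; this also works but is messier, so I would prefer the frame-transformation route above.
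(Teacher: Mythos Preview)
Your argument is correct. The paper's own proof takes a very slightly different route: instead of differentiating the defining relation $\Bu(S_\rho(\Bx))=R_\rho\Bu(\Bx)$ in $\rho$ directly, it first invokes (citing \cite{GGNAMAS14,HWXAHE}) the equivalent characterization \eqref{helidef}, namely that a helically symmetric field can be written as $\Bu(r,\theta,z)=\Bv(r,\sigma\theta+z)$ (and similarly for $\Bh$), and then notes that differentiating this representation in $\theta$ and in $z$ immediately gives $\partial_\theta\Bu=\sigma\partial_z\Bu$. Your approach has the advantage of being entirely self-contained --- you derive the componentwise scalar invariance directly from the definition and the transformation law of the cylindrical frame under $R_\rho$ --- whereas the paper's version is shorter because the key reduction is outsourced to the references. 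At bottom the two are the same differentiation of the same symmetry; you simply carry out explicitly what the paper cites.
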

	\begin{proof}
  According to \cite{GGNAMAS14,HWXAHE}, the continuous vector fields $(\Bu, \Bh)$ are helically symmetric if and only if there exist $\left(\Bv, \Bw\right)$ and  a constant $\sigma \in \mathbb{R}$ such that
		\begin{equation}\label{helidef}
		\Bu(r,\theta,z)= 	\Bv(r,\sigma\theta+z),\qquad
		\Bh(r,\theta,z)= 	\Bw(r,\sigma\theta+z).
		\end{equation}
	Differentiating \eqref{helidef} with respect to $\theta$ and $z$ variables  gives \eqref{dhelicdef}.  The proof of  Lemma \ref{Leheliclfl} is completed.
	\end{proof}
	For a solution of MHD system \eqref{eqsteadyMHD} in a slab with periodic boundary conditions, if the velocity field and magnetic filed is $L^{\infty}$-bounded, then its gradient  must also be $L^{\infty}$-bounded.
	Here we omit the proof, which can be found in \cite[Lemma 2.4]{HWJZJMAA23}. 
	\begin{lemma}\label{le:pebu}
		Let $(\Bu, \Bh)$ be  bounded smooth solutions to  the  MHD  system \eqref{eqsteadyMHD} in $\mathbb{R}^{2}\times \mathbb{T}$. Then $(\nabla\Bu, \nabla\Bh)$ are  bounded.
	\end{lemma}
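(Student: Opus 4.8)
The plan is to treat this as a purely local, quantitative elliptic-regularity statement: since $(\Bu,\Bh)$ is already smooth, the only issue is to bound $\nabla\Bu$ and $\nabla\Bh$ uniformly as the base point runs over the noncompact slab, and this I would do by a two-step bootstrap built on interior estimates for the steady Stokes system and for the Laplace equation. First I would extend $(\Bu,\Bh,P)$ periodically in $z$ to all of $\mathbb{R}^3$, obtaining bounded smooth solutions of \eqref{eqsteadyMHD} on $\mathbb{R}^3$; it then suffices to prove $\sup_{\Bx_0\in\mathbb{R}^3}\|(\nabla\Bu,\nabla\Bh)\|_{L^\infty(B_1(\Bx_0))}\le C(M)$ with $M:=\|(\Bu,\Bh)\|_{L^\infty}$. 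By translation invariance of \eqref{eqsteadyMHD} I may fix $\Bx_0=0$ and work on the fixed balls $B_1\subset B_{3/2}\subset B_2\subset\mathbb{R}^3$; every constant produced below depends only on these radii and on the integrability exponent, hence is independent of $\Bx_0$.

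Next I would rewrite the system in divergence form using $\nabla\cdot\Bu=\nabla\cdot\Bh=0$, namely $-\Delta\Bu+\nabla P=\nabla\cdot(\Bh\otimes\Bh-\Bu\otimes\Bu)$ together with $\nabla\cdot\Bu=0$, and $-\Delta\Bh=\nabla\cdot(\Bh\otimes\Bu-\Bu\otimes\Bh)$. In the first bootstrap step the tensor fields on the right-hand sides lie in $L^\infty(B_2)\subset L^p(B_2)$ for every $p\in(1,\infty)$ with norm $\le CM^2$, so the classical interior $L^p$ estimate for the Laplacian gives $\|\nabla\Bh\|_{L^p(B_{3/2})}\le C(\|\Bh\otimes\Bu-\Bu\otimes\Bh\|_{L^p(B_2)}+\|\Bh\|_{L^p(B_2)})$, and the corresponding interior estimate for the steady Stokes system (cf. \cite{GAGP11}) gives $\|\nabla\Bu\|_{L^p(B_{3/2})}\le C(\|\Bh\otimes\Bh-\Bu\otimes\Bu\|_{L^p(B_2)}+\|\Bu\|_{L^p(B_2)})$, the pressure entering only through $\nabla P$ so that its additive-constant ambiguity is harmless. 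Thus $\nabla\Bu,\nabla\Bh\in L^p(B_{3/2})$ for all finite $p$ with a bound depending only on $M$, and by the Sobolev--Morrey embedding (with $p>3$) the fields $\Bu,\Bh$ are Hölder continuous on $\overline{B_{3/2}}$ with uniform bound.

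In the second bootstrap step the full quadratic terms $(\Bu\cdot\nabla)\Bu$, $(\Bh\cdot\nabla)\Bh$, $(\Bu\cdot\nabla)\Bh$, $(\Bh\cdot\nabla)\Bu$ are now products of an $L^\infty$ field with an $L^p$ field, hence belong to $L^p(B_{3/2})$ for every finite $p$ with norm controlled by $M$ and the first-step bounds. Applying the interior $W^{2,p}$ estimate for $-\Delta\Bh=(\Bh\cdot\nabla)\Bu-(\Bu\cdot\nabla)\Bh$ and the interior $W^{2,p}$ estimate for the steady Stokes system to the first equation of \eqref{eqsteadyMHD}, I would obtain $\Bu,\Bh\in W^{2,p}(B_1)$ for all finite $p$, uniformly in $\Bx_0$; choosing $p>3$ and invoking Morrey's inequality once more yields $\nabla\Bu,\nabla\Bh\in C^{0,\alpha}(\overline{B_1})\hookrightarrow L^\infty(B_1)$ with a bound depending only on $M$. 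Taking the supremum over $\Bx_0\in\mathbb{R}^3$ completes the argument.

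The part needing the most care is the uniformity of the interior Stokes/elliptic constants in the base point $\Bx_0$; this is, however, automatic from the translation invariance of \eqref{eqsteadyMHD} on $\mathbb{R}^3$ once the solution has been extended periodically, so there is no genuine obstacle. A secondary technical point --- that the pressure is controlled only modulo additive constants in the local Stokes estimates --- is irrelevant here since the conclusion concerns $\nabla\Bu$ and $\nabla\Bh$ only; alternatively one could sidestep the pressure entirely by bootstrapping the vorticity equations, but the Stokes formulation keeps the argument most transparent.
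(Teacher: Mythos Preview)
Your proposal is correct; the two-step bootstrap via interior $L^p$ and then $W^{2,p}$ estimates for the Stokes and Laplace operators, after periodic extension and translation to a fixed ball, is the standard route to this kind of uniform gradient bound. The paper itself omits the proof entirely and simply cites \cite[Lemma~2.4]{HWJZJMAA23}, whose argument is essentially the one you have written out, so there is nothing further to compare.
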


	\section{Helically Symmetric MHD Flows}\label{Sec3}
 Since the helically symmetric MHD flow is periodic along the axial direction,  some observations in  \cite{HWJZJMAA23} are employed in this section. Before proving Theorem \ref{th:01},  we first insert
a lemma, which shows that the D-solution of  helically symmetric MHD system must be  constant.
	\begin{lemma}\label{Le:32}
		Let $(\Bu, \Bh)$ be a bounded smooth helically symmetric solution to the  MHD system \eqref{eqsteadyMHD} in $\mathbb{R}^{3}$, 
		with a finite Dirichlet integral in the slab, i.e.,
		\begin{equation}\label{21Dintass}
			\int_{\mathbb{R}^{2}\times
				(0,\, 1)}(|\nabla\Bu|^{2} +|\nabla\Bh|^{2})\,d\Bx<+\infty.
		\end{equation}
	Then $(\Bu, \Bh)$ must be  constant vectors of the form $\Bu=\left(0, 0, C_{1}\right)$, $\Bh=\left(0, 0, C_{2}\right)$.
	\end{lemma}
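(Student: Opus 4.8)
The goal is to prove $\nabla\Bu\equiv\nabla\Bh\equiv0$ on the slab; once that is known, $(\Bu,\Bh)$ are constant vectors, and helical symmetry $\Bu(S_{\rho}\Bx)=R_{\rho}\Bu(\Bx)$ forces any constant $\boldsymbol c$ with $\boldsymbol c=R_{\rho}\boldsymbol c$ for all $\rho$ to be of the form $(0,0,C)$, which gives the stated conclusion. First I would invoke Lemma \ref{Leheliclfl}: the helical flow is $2\pi|\sigma|$-periodic in $z$, so with the normalization $\sigma=\tfrac1{2\pi}$ we may regard $(\Bu,\Bh)$ as a bounded smooth solution of \eqref{eqsteadyMHD} on $\mathbb{R}^{2}\times\mathbb{T}$, and by Lemma \ref{le:pebu} (together with elliptic bootstrapping) all derivatives of $(\Bu,\Bh)$ are bounded. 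Set $F(R)=\int_{\Omega_{R}}(|\nabla\Bu|^{2}+|\nabla\Bh|^{2})\,d\Bx$; by \eqref{21Dintass}, $F$ is nondecreasing and bounded, hence $\int_{\OR}(|\nabla\Bu|^{2}+|\nabla\Bh|^{2})\,d\Bx\to0$ as $R\to\infty$. I would also record the vanishing of the fluxes: integrating $\nabla\cdot\Bu=0$ over $(B_{\rho_{2}}\setminus B_{\rho_{1}})\times\mathbb{T}$ and using $z$-periodicity shows $\int_{\partial B_{\rho}\times\mathbb{T}}\Bu\cdot\Be_{r}\,dS$ is independent of $\rho$; letting $\rho\to0^{+}$ and using $|\Bu|\le\|\Bu\|_{L^{\infty}}$ makes it $0$, and likewise for $\Bh$. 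Consequently $\Bu\cdot\nabla\eta_{R}$ has zero average over $\OR$ for every radial cutoff $\eta_{R}$, which is exactly what is needed to invoke the Bogovskii operator of Lemma \ref{Bogovskii}.

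The heart of the argument is a Saint-Venant type estimate for $F$, in the spirit of \cite{aBGWX,HWXAHE,HWJZJMAA23}. Fix a radial cutoff $\eta_{R}=\eta_{R}(r)$ with $\eta_{R}\equiv1$ on $B_{R-1}$, $\eta_{R}\equiv0$ off $B_{R}$, $|\nabla\eta_{R}|\le C$, and let $\bBV_{R}$ be the Bogovskii field on $\OR$ with $\nabla\cdot\bBV_{R}=\Bu\cdot\nabla\eta_{R}$ and $\|\nabla\bBV_{R}\|_{L^{2}(\OR)}\le C\|\Bu\|_{L^{2}(\OR)}$, with $C$ independent of $R$. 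Testing $\eqref{eqsteadyMHD}_{1}$ against $\eta_{R}\Bu-\bBV_{R}$ and $\eqref{eqsteadyMHD}_{2}$ against $\eta_{R}\Bh$, adding, and using $\nabla\cdot\Bu=\nabla\cdot\Bh=0$, the pressure integral vanishes (because $\nabla\cdot(\eta_{R}\Bu-\bBV_{R})=0$) and the magnetic nonlinearities combine into the total derivative $(\Bh\cdot\nabla)(\Bu\cdot\Bh)=\nabla\cdot(\Bh\,(\Bu\cdot\Bh))$; one arrives at
\begin{align*}
\int_{\Omega_{R}}\eta_{R}(|\nabla\Bu|^{2}+|\nabla\Bh|^{2})\,d\Bx
&=\int_{\OR}\Big[\tfrac12(\Bu\cdot\nabla\eta_{R})(|\Bu|^{2}+|\Bh|^{2})-(\Bh\cdot\nabla\eta_{R})(\Bu\cdot\Bh)\\
&\quad-\tfrac12\nabla\eta_{R}\cdot\nabla(|\Bu|^{2}+|\Bh|^{2})+\nabla\Bu:\nabla\bBV_{R}+(\Bu\cdot\nabla)\Bu\cdot\bBV_{R}-(\Bh\cdot\nabla)\Bh\cdot\bBV_{R}\Big]\,d\Bx .
\end{align*}
The left-hand side bounds $F(R-1)$ from below, and the right-hand side is an integral over the shell $\OR$.

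It then remains to control the right-hand side. The terms containing $\nabla\eta_{R}\cdot\nabla(|\Bu|^{2}+|\Bh|^{2})$, $\nabla\Bu:\nabla\bBV_{R}$, and $(\Bu\cdot\nabla)\Bu\cdot\bBV_{R}-(\Bh\cdot\nabla)\Bh\cdot\bBV_{R}$ are routine: Cauchy--Schwarz, the Bogovskii bound, the uniform bound on $(\Bu,\Bh)$, and $|\OR|\le CR$ give a bound $\lesssim R^{1/2}\bigl(F(R)-F(R-1)\bigr)^{1/2}=o(R^{1/2})$. The genuine difficulty --- and the main obstacle --- is the cubic boundary term $\int_{\OR}\nabla\eta_{R}\cdot\bigl[\tfrac12\Bu(|\Bu|^{2}+|\Bh|^{2})-\Bh(\Bu\cdot\Bh)\bigr]\,d\Bx$: the naive bound $\|(\Bu,\Bh)\|_{L^{\infty}}^{3}\,|\OR|\lesssim R$ only reproduces $F(R)\le CR$, which is already known and useless. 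To beat it I would exploit the divergence-free structure, rewriting $\Bu\cdot\nabla\eta_{R}=\nabla\cdot(\eta_{R}\Bu)$ and integrating by parts, use the local energy identity $\nabla\cdot\boldsymbol{J}=-(|\nabla\Bu|^{2}+|\nabla\Bh|^{2})$ (with $\boldsymbol{J}$ built from $\Bu,\Bh,P$) together with the vanishing flux, and also invoke the convergence $(\Bu,\Bh)\to(\Bu_{\infty},\Bh_{\infty})$ as $r\to\infty$ to constant vectors --- which by helical symmetry must be of the form $\bigl((0,0,C_{1}),(0,0,C_{2})\bigr)$ --- with a convergence rate extracted from interior estimates and the tail of the finite Dirichlet integral, after which the residual linear transport terms integrate to zero against the $z$-independent cutoff by periodicity. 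Assembling these estimates yields a closed Saint-Venant inequality for $F$ that, combined with the boundedness of $F$, forces $F\equiv0$ on $\mathbb{R}^{2}\times\mathbb{T}$. Hence $\nabla\Bu\equiv\nabla\Bh\equiv0$, so $(\Bu,\Bh)\equiv(\Bu_{\infty},\Bh_{\infty})=\bigl((0,0,C_{1}),(0,0,C_{2})\bigr)$.
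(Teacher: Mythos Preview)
Your overall Saint-Venant strategy is the right one and is exactly what the paper does, but the proof as written has a genuine gap at the point you yourself flag as ``the genuine difficulty'': the cubic shell term
\[
\int_{\OR}\Bigl[\tfrac12(\Bu\cdot\nabla\eta_{R})(|\Bu|^{2}+|\Bh|^{2})-(\Bh\cdot\nabla\eta_{R})(\Bu\cdot\Bh)\Bigr]\,d\Bx .
\]
Your proposed remedy---rewriting $\Bu\cdot\nabla\eta_{R}=\nabla\cdot(\eta_{R}\Bu)$, invoking a local energy identity, and appealing to convergence $(\Bu,\Bh)\to(\Bu_{\infty},\Bh_{\infty})$ as $r\to\infty$---is not carried out and, as stated, is partly circular: pointwise convergence to a constant at infinity is essentially what the lemma asserts, and finite Dirichlet integral on $\mathbb{R}^{2}\times\mathbb{T}$ by itself does not give it with any usable rate. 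The missing idea is much simpler. Since $\eta_{R}$ is radial, the cubic integrand carries a factor of $u^{r}$ (respectively $h^{r}$), and the helical identity~\eqref{dhelicdef} combined with divergence-freeness yields $\partial_{r}\int_{0}^{1}ru^{r}\,dz=0$, hence $\int_{0}^{1}u^{r}\,dz=0$ for every $(r,\theta)$; the same holds for $h^{r}$. This gives the Poincar\'e inequalities
\[
\|u^{r}\|_{L^{2}(\OR)}\le C\|\partial_{z}u^{r}\|_{L^{2}(\OR)},\qquad
\|h^{r}\|_{L^{2}(\OR)}\le C\|\partial_{z}h^{r}\|_{L^{2}(\OR)},
\]
with $C$ independent of $R$, and then the cubic term is bounded by $C\|(\Bu,\Bh)\|_{L^{\infty}}^{2}\,R^{1/2}\,\|\nabla(\Bu,\Bh)\|_{L^{2}(\OR)}$, exactly the form needed for the differential inequality $Y(R)\le CR^{1/2}[Y'(R)]^{1/2}$. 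You in fact already established the integrated flux $\int_{0}^{2\pi}\!\int_{0}^{1}u^{r}\,d\theta\,dz=0$; the helical relation $\partial_{\theta}=\sigma\partial_{z}$ is precisely what upgrades this to the pointwise-in-$(r,\theta)$ statement (and is what makes Poincar\'e in $z$ alone available).

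A secondary issue: you invoke the Bogovskii operator on the full annular shell $\OR=(B_{R}\setminus\overline{B_{R-1}})\times(0,1)$ with a constant independent of $R$, but this is not obvious---the Bogovskii constant of a thin annulus of radius $R$ is not uniformly bounded. The paper sidesteps this by working, for each fixed $\theta$, on the fixed rectangle $D_{R}=(R-1,R)\times(0,1)$ and solving $\partial_{r}\Psi^{r}+\partial_{z}\Psi^{z}=ru^{r}$ there (Lemma~\ref{Bogovskii}), which gives a uniform constant. The paper also treats the pressure differently: rather than building a divergence-free test function, it keeps the term $\int(P-P_{R})\,\Bu\cdot\nabla\varphi_{R}$, inserts the Bogovskii potential for $ru^{r}$, and replaces $\partial_{r}P,\partial_{z}P$ via the momentum equations~\eqref{eqA8}. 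Once you have the $u^{r}$-Poincar\'e inequality above, either route closes, but as written your pressure step needs the same fix for the Bogovskii constant.
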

\begin{proof}[Proof]
	The proof is divided into two steps.
	
	\emph{Step 1.} \emph {Set up.}
	For  $R\geq 2$ sufficiently large, we define the  cut-off function $\varphi_R(r)$,
	\be \label{cut-off}
	\varphi_R(r) = \left\{ \ba
	&1,\ \ \ \ \ \ \ \ \ \ r < R-1, \\
	&R-r,\ \ \ \ R-1 \leq r \leq R, \\
	&0, \ \ \ \ \ \ \ \ \ \ r > R.
	\ea  \right.
	\ee
 Multiplying  $\eqref{eqsteadyMHD}_{1}$ and $\eqref{eqsteadyMHD}_{2}$  by $\varphi_{R}(r)\Bu$ and $\varphi_{R}(r)\Bh$, respectively, then integrating by parts over  the  slab $\Omega=\mathbb{R}^{2}\times (0,1)$ and adding them up, 
one obtains	
	\begin{equation}\label{muinteeq}
		\begin{split}
		&\int_{\Omega}(|\nabla\Bu|^{2}+|\nabla\Bh|^{2})\varphi_{R} \, d\Bx\\
		=&-\int_{\OR}\nabla\varphi_{R}
		\cdot \nabla\Bu\cdot \Bu \,d\Bx-\int_{\OR}\nabla\varphi_{R}
		\cdot \nabla\Bh\cdot \Bh \,d\Bx + \frac{1}{2}\int_{\OR}(|\Bu|^{2}+|\Bh|^{2})\Bu\cdot \nabla\varphi_{R} \,d\Bx \\
	   &+ \int_{\OR} (P-P_{R}) \Bu \cdot \nabla \varphi_R \,d\Bx-\int_{\OR}
	  (\Bu\cdot \Bh) (\Bh\cdot \nabla\varphi_{R}) \,d\Bx\\
	  =&:J_{1}+J_{2}+J_{3}+J_{4}+J_{5},
		\end{split}
	\end{equation}
where $P_{R}=\dfrac{1}{|\OR|}\displaystyle\int_{\OR}P\,d\Bx$.
The reason we choose $P-P_{R}$  instead of $P$ is that $P-P_{R}$ is a $z$-variable periodic function in $\OR$.  Since $(\Bu,\Bh)$  is a $z$-variable periodic  function in  $\mathbb{R}^{3}$,  the momentum equation $\eqref{eqsteadyMHD}_{1}$ yields $\nabla P(r,\theta,z+1)-\nabla P(r,\theta,z)=0$, which leads to $ P(r,\theta,z+1)-P(r,\theta,z)=c$, where $c$ is a constant. So one has 
\[
\begin{split}
P(r,\theta, z+1)-\frac{1}{|\OR|}\int_{1}^{2}\int_{B_{R}\backslash B_{R-1}}&P(r,\theta,z+1)\, dSd(z+1)\\
=&P(r,\theta, z)-\frac{1}{|\OR|}\int_{0}^{1}\int_{B_{R}\backslash B_{R-1}}P(r,\theta,z)\, dSdz,
\end{split}
\]
i.e. $P-P_{R}$ is a periodic function with respect to $z$ variable, the boundary terms will vanish when we integrate by parts.

The straightforward computations give
	\begin{equation}\label{eqpressequr}
	J_{4}
		= -\int_0^1 \int_0^{2\pi} \int_{R-1}^R (P-P_{R})u^{r}r \, dr d\theta dz.
	\end{equation}
	Using the divergence free condition \eqref{eqdivecylinc}  and helically symmetric  properties \eqref{dhelicdef}, for all  $0\leq r<+\infty$,  yields
	\begin{equation}\label{eqA116}
		\begin{split}
			\partial_{r}\int_{0}^{1}ru^{r}\,dz=-\int_{0}^{1}\partial_{\theta}u^{\theta}+\partial_{z}(ru^{z})\,dz=-\int_{0}^{1}\dfrac{1}{2\pi}\partial_{z}u^{\theta}+\partial_{z}(ru^{z})\,dz =0.
		\end{split}
	\end{equation}
	This implies
	\begin{equation}\label{eqA117}
		\int_{0}^{1}ru^{r}\, dz=0
		\quad \text{and}\quad \int_{0}^{1} \int_{R-1}^{R}ru^{r}\, drdz=0.
	\end{equation}
Similarly, it holds that
	\begin{equation}\label{eqA11emagndiv}
	\int_{0}^{1}rh^{r}\, dz=0
	\quad \text{and}\quad \int_{0}^{1} \int_{R-1}^{R}rh^{r}\, drdz=0.
\end{equation}
	It follows from \eqref{eqA117} and Lemma \ref{Bogovskii} that for every fixed $\theta\in [0,2\pi]$,
there exists a vector valued function $\Ps_{R,\theta}(r,z)\in H^{1}_{0}(D_{R}; \mathbb{R}^{2})$ satisfying
 	\begin{equation}\label{eqA118}
 	\partial_{r}\Psi_{R,\theta}^{r}
 	+\partial_{z}\Psi_{R,\theta}^{z}
 	=ru^{r},
 \end{equation}
 together with the estimate
 \begin{equation}\label{eqA119}
 	\|(\partial_{r}, \partial_z) \Ps_{R,\theta}\|_{L^{2}(D_{R})} \leq C\|ru^{r}\|_{L^{2}(D_{R})},
 \end{equation}
 where $C$ is independent of $\theta$ and $R$.
 Owing to \eqref{eqA117}, the Poincar\'{e} inequality for $u^{r}$
 \begin{equation}\label{eqA122}
 	\|u^{r}\|_{L^{2}(\OR)}\leq	C\|\partial_{z}u^{r}\|_{L^{2}(\OR)} \end{equation}
 holds.
 This, together with \eqref{eqA119}, gives
 \begin{equation}\label{eqA123}
 	\|(\partial_{r}, \partial_z)\Ps_{R,\theta}\|_{L^{2}(\DR)} \leq C\|ru^{r}\|_{L^{2}(\DR)}\leq CR^{\frac{1}{2}}\|u^{r}\|_{L^{2}(\OR)}\leq CR^{\frac{1}{2}}\|\nabla\Bu\|_{L^{2}(\OR)}.
 \end{equation}
 Note that the Bogovskii map is a linear map  \cite[Section III.3]{GAGP11}. Hence there is a universal constant $C>0$ such that
 \begin{equation}\label{eqA124}
 	\|(\partial_{\theta}\partial_{r}, \partial_{\theta}\partial_{z}) \Ps_{R,\theta}\|_{L^{2}(\DR)} \leq C\|r\partial_{\theta}
 	u^{r}\|_{L^{2}(\DR)}\leq CR^{\frac{1}{2}}\|\partial_{z} u^{r}\|_{L^{2}(\OR)},
 \end{equation}
 where  the last inequality is due to \eqref{dhelicdef}.
Due to \eqref{eqA11emagndiv},  the Poincar\'{e} inequality for $h^{r}$
\begin{equation}\label{eqmagnepoin}
	\|h^{r}\|_{L^{2}(\OR)}\leq	C\|\partial_{z}h^{r}\|_{L^{2}(\OR)} \end{equation}
also holds.
 Furthermore, it follows from the periodicity of $P-P_{R}$, \eqref{eqpressequr} and \eqref{eqA118} that one obtains
 \begin{equation}\label{preeqes}
 	\begin{split}
 		J_{4}
 	    =&-\int_{0}^{1}
 		\int_{0}^{2\pi}
 		\int_{R-1}^{R}(P-P_{R})(\partial_{r}\Psi_{R,\theta}^{r}+\partial_{z}\Psi_{R,\theta}^{z}) \, drd\theta dz\\
 		=& \int_{0}^{1}\int_{0}^{2\pi}
 		\int_{R-1}^{R}(\partial_{r}P\Psi_{R,\theta}^{r}+ \partial_{z}P\Psi_{R,\theta}^{z})\, drd\theta dz=:J_{41}+J_{42}.
 	\end{split}
 \end{equation}
By virtue of  $\eqref{eqA8}_{1}$, $\eqref{eqA8}_{3}$ and integration by parts, one gets
 \begin{equation}\label{eqArP}
 	\begin{split}
 		& \int_{0}^{1}\int_{0}^{2\pi}\int_{R-1}^{R}\partial_{r}P\Psi^{r}_{R,\theta}\, drd\theta dz\\
 		=& -\int_{0}^{1}\int_{0}^{2\pi}\int_{R-1}^{R}\left(\partial_{r}u^{r}\partial_{r}\Psi^{r}_{R,\theta}+\partial_{z}u^{r}\partial_{z}\Psi^{r}_{R,\theta}
 		\right)\, drd\theta dz  -\int_{0}^{1}\int_{0}^{2\pi}\int_{R-1}^{R}\dfrac{1}{r^2}\partial_{\theta}u^{r}\partial_{\theta}\Psi^{r}_{R,\theta}\, drd\theta dz \\
 		&+\int_{0}^{1}\int_{0}^{2\pi}\int_{R-1}^{R}\left[\left(\dfrac{1}{r}\partial_{r}-\dfrac{1}{r^2}\right)u^{r}-\dfrac{2}{r^2}\partial_{\theta}u^{\theta}\right]\Psi^{r}_{R,\theta}\, drd\theta dz  \\
 		&-\int_{0}^{1}\int_{0}^{2\pi}\int_{R-1}^{R}\left[\left(u^{r}\partial_{r}+\dfrac{u^{\theta}}{r}\partial_{\theta}+u^{z}\partial_{z}\right)u^{r}-\dfrac{(u^{\theta})^2}{r}\right]\Psi_{R,\theta}^{r}\, drd\theta dz\\
 	  &+\int_{0}^{1}\int_{0}^{2\pi}\int_{R-1}^{R}\left[\left(h^{r}\partial_{r}+\dfrac{h^{\theta}}{r}\partial_{\theta}+h^{z}\partial_{z}\right)h^{r}-\dfrac{(h^{\theta})^2}{r}\right]\Psi_{R,\theta}^{r}\, drd\theta dz \\
 	  =&: J_{411}+J_{412}+J_{413}+J_{414}+J_{415}
 	\end{split}
 \end{equation}
 and
 \begin{equation}\label{eqAzP}
 	\begin{split}
 		&\int_{0}^{1}\int_{0}^{2\pi}\int_{R-1}^{R}\partial_{z}P\Psi^{z}_{R,\theta}\, drd\theta dz\\
 		=&	-\int_{0}^{1}\int_{0}^{2\pi}\int_{R-1}^{R}\left(\partial_{r}u^{z}\partial_{r}\Psi_{R,\theta}^{z}+\partial_{z}u^{z}\partial_{z}\Psi_{R,\theta}^{z}+\dfrac{1}{r^2}\partial_{\theta}u^{z}\partial_{\theta}\Psi_{R,\theta}^{z}\right)\, drd{\theta}dz\\
 		& -\int_{0}^{1}\int_{0}^{2\pi}\int_{R-1}^{R}\left[\left(u^{r}\partial_{r}+
 		\dfrac{u^{\theta}}{r}\partial_{\theta}+u^{z}\partial_{z}-\dfrac{1}{r}\partial_{r}\right)u^{z}\right]\Psi_{R,\theta}^{z}\, drd\theta dz\\
 		 &+\int_{0}^{1}\int_{0}^{2\pi}\int_{R-1}^{R}\left[\left(h^{r}\partial_{r}+\dfrac{h^{\theta}}{r}\partial_{\theta}+h^{z}\partial_{z}\right)h^{z}\right]\Psi_{R,\theta}^{z}\, drd\theta dz.
 	\end{split}
 \end{equation}
\emph{Step 2.} \emph {Saint-Venant type estimate.} Using helical identities \eqref{dhelicdef}, Poincar\'e inequality \eqref{eqA122} and \eqref{eqmagnepoin},   the estimates   \eqref{eqA123}--\eqref{eqA124}  and $\|u^{r}\|_{L^{2}(\OR)}\leq CR^{\frac{1}{2}}\|\Bu\|_{L^{\infty}(\OR)}$, one has
\begin{equation}\label{eqA126}
	\begin{split}
	|J_{411}|\leq	&C\|\nabla\Bu\|_{L^{2}(\DR)}\cdot   \|(\partial_{r}, \partial_z)\Ps_{R,\theta}\|_{L^{2}(\DR)}\\
		\leq&      CR^{-\frac{1}{2}}\|\nabla\Bu\|_{L^{2}(\OR)}\cdot   R^{\frac{1}{2}}\|u^{r}\|_{L^{2}(\OR)}\leq  CR^{\frac{1}{2}} \|\nabla\Bu\|_{L^{2}(\OR)}
	\end{split}
\end{equation}
and
\begin{equation}\label{eqA127}
	\begin{split}
	|J_{412}|
		\leq&CR^{-2}\|\partial_{\theta}u^{r}\|_{L^{2}(\DR)}\cdot   \|\partial_{\theta} \partial_z\Ps_{R,\theta}\|_{L^{2}(\DR)}\\ \leq&CR^{-\frac{5}{2}}\|\partial_{z}u^{r}\|_{L^{2}(\OR)}\cdot R^{\frac{1}{2}}\|\nabla\Bu\|_{L^{2}(\OR)}\\
		\leq& CR^{-2}\|\nabla\Bu\|^{2}_{L^{2}(\OR)}\leq C\|\nabla\Bu\|_{L^{2}(\OR)},
	\end{split}
\end{equation}	
where  the last inequality is obtained since the  assumption \eqref{21Dintass}. Furthermore, one has	
\begin{equation}\label{eqA128}
	\begin{split}
	|J_{413}|&\leq CR^{-1}(\| \nabla\Bu\|_{L^{2}(\DR)}+R^{-1}\| \partial_{z}u^{\theta}\|_{L^{2}(\DR)})
 \|\partial_z\Ps_{R,\theta}\|_{L^{2}(\DR)}\\
 &\leq	CR^{-1}(\| \nabla\Bu\|_{L^{2}(\DR)}+R^{-1}\| \nabla\Bu\|_{L^{2}(\DR)})
	\cdot R^{\frac{1}{2}} \|u^{r}\|_{L^{2}(\OR)}\\ 
	&\leq CR^{-1}R^{-\frac{1}{2}}\| \nabla\Bu\|_{L^{2}(\OR)}
		\cdot R^{\frac{1}{2}} \|u^{r}\|_{L^{2}(\OR)}
		\leq CR^{-\frac{1}{2}}\| \nabla\Bu\|_{L^{2}(\OR)},
	\end{split}
\end{equation}
where the first line we have used helical identities \eqref{dhelicdef}.
A similar argument shows that
\begin{equation}\label{eqA129}
	\begin{split}			   
	|J_{414}|	\leq&
	C\|\Bu\|_{L^{\infty}(\OR)}
	\left(R^{-\frac{1}{2}}\|\nabla\Bu\|_{L^{2}(\OR)}+R^{-\frac{3}{2}}\|u^{\theta}\|_{L^{2}(\OR)}\right)\cdot R^{\frac{1}{2}}\|u^{r}\|_{L^{2}(\OR)}\\
	\leq&
	CR^{\frac{1}{2}}\| \nabla\Bu\|_{L^{2}(\OR)}
	\end{split}
\end{equation}
and
\begin{equation}\label{eqnewmg}
	\begin{split}			   
		|J_{415}|	\leq&
		C\|\Bh\|_{L^{\infty}(\OR)}
		\left(R^{-\frac{1}{2}}\|\nabla\Bh\|_{L^{2}(\OR)}+R^{-\frac{3}{2}}\|h^{\theta}\|_{L^{2}(\OR)}\right)\cdot R^{\frac{1}{2}}\|u^{r}\|_{L^{2}(\OR)}\\
		\leq&
		CR^{\frac{1}{2}}(\| \nabla\Bh\|_{L^{2}(\OR)}+\| \nabla\Bu\|_{L^{2}(\OR)}).
	\end{split}
\end{equation}
Collecting the estimates \eqref{eqA126}--\eqref{eqnewmg} gives
\begin{equation}\label{eqA130}
	\left|J_{41}\right|
	\leq 	CR^{\frac{1}{2}}(\| \nabla\Bh\|_{L^{2}(\OR)}+\| \nabla\Bu\|_{L^{2}(\OR)}).
\end{equation}
	Similarly, it holds that
\begin{equation}\label{eqA131}
	\left|J_{42}\right|
	\leq 	CR^{\frac{1}{2}}(\| \nabla\Bh\|_{L^{2}(\OR)}+\| \nabla\Bu\|_{L^{2}(\OR)}).
\end{equation}

Next we  estimate the other terms on the right hand side of \eqref{muinteeq}.
Using  H{\"o}lder inequality  yields
\begin{equation}\label{basces1}
	\begin{split}
	\left|J_{1} \right|&\leq
	C\|\nabla\Bu\|_{L^{2}(\OR)}\cdot \|\Bu\|_{L^{2}(\OR)}\\
	 &\leq C\|\nabla\Bu\|_{L^{2}(\OR)}\cdot R^{\frac{1}{2}}\|\Bu\|_{L^{\infty}(\OR)}\leq CR^{\frac{1}{2}}\|\nabla\Bu\|_{L^{2}(\OR)}.
		\end{split}
\end{equation}
Likewise,
\begin{equation}
|J_{2}|\leq CR^{\frac{1}{2}}\|\nabla\Bh\|_{L^{2}(\OR)}.
\end{equation}
It follows from	Poincar\'{e} inequality \eqref{eqA122} that one obtains
\begin{equation}\label{basces2}
	\left|J_{3}\right|\leq C(\|\Bu\|^{2}_{L^{\infty}(\OR)}+\|\Bh\|^{2}_{L^{\infty}(\OR)}) \|u^{r}\|_{L^{2}(\OR)}\cdot R^{\frac{1}{2}}\leq CR^{\frac{1}{2}}\|\nabla\Bu\|_{L^{2}(\OR)}.
\end{equation}
Analogously, using the Poincar\'{e} inequality \eqref{eqmagnepoin} yields
\begin{equation}\label{estieqj5}
|J_{5}|\leq C\|\Bu\|_{L^{\infty}(\OR)}\|\Bh\|_{L^{\infty}(\OR)}\|h^{r}\|_{L^{2}(\OR)}\cdot R^{\frac{1}{2}}\leq CR^{\frac{1}{2}}\|\nabla\Bh\|_{L^{2}(\OR)}.
\end{equation}
Combining  \eqref{eqA130}--\eqref{estieqj5}, one arrives at
\begin{equation}\label{eqbaesin}
	\int_{\Omega}(|\nabla\Bu|^{2}+|\nabla\Bh|^{2})\varphi_{R} \, d\Bx
	\leq CR^{\frac{1}{2}}(\|\nabla\Bu\|_{L^{2}(\OR)}+\|\nabla\Bh\|_{L^{2}(\OR)}).
\end{equation}

Let
\begin{equation}\label{gDinsgro}
	Y(R)=
	\int_{0}^{1}\iint_{\mathbb{R}^{2}}
	(|\nabla\Bu|^{2}+|\nabla\Bh|^{2})\varphi_{R}\left(\sqrt{x_{1}^{2}+x_{2}^{2}}\right)
	\, dx_{1} dx_{2} dx_{3}.
\end{equation}
Straightforward computations give
\begin{equation}\label{eqA50}
	Y(R)=\int_{0}^{1}\int_{0}^{2\pi}
	\left(\int_{0}^{R-1}(|\nabla\Bu|^{2}+|\nabla\Bh|^{2})r\,dr+\int_{R-1}^{R}(|\nabla\Bu|^{2}+|\nabla\Bh|^{2})(R-r)r\, dr\right)d\theta dz
\end{equation}
and
\begin{equation}\label{eqA51}
	Y^{\prime}(R)=\int_{\OR}(|\nabla\Bu|^{2}+|\nabla\Bh|^{2})\,d\Bx.
\end{equation}
Hence the estimate \eqref{eqbaesin} can be written
as
\begin{equation}\label{eqA52}
	Y(R)\leq CR^{\frac{1}{2}} \left[Y^{\prime}(R)\right]^{\frac{1}{2}}.
\end{equation}
If $\nabla\Bu$ and $\nabla\Bh$ are not identically equal to zero, then $Y(R)> 0$ for $R\geq R_{0}$ with some $R_{0}>0$, and one has
\begin{equation}\label{eqA53}
	\dfrac{1}{C^{2}R}\leq
	\left(-\dfrac{1}{Y(R)}\right)^{\prime}.
\end{equation}
Integrating it over $(R_{0},R)$ for large $R_{0}$, one arrives at
\begin{equation}\label{eqA54}
	\dfrac{1}{C^{2}}\ln\dfrac{R}{R_{0}}\leq
	-\dfrac{1}{Y(R)}+\dfrac{1}{Y(R_{0})}\leq \dfrac{1}{Y(R_{0})}.
\end{equation}
This leads to a contradiction when $R$ is sufficiently large. Hence $\nabla \Bu=\nabla \Bh\equiv0$, therefore  $\Bu$ and $\Bh$ are constant vectors.	Moreover, the helical properties \eqref{helidef} give
\begin{equation}
	\begin{split}
	\Bu=&u^{r}(r,\theta,z) \Be_{r}+
	  u^{\theta}(r,\theta,z)\Be_{\theta}+
	   u^{z}(r,\theta,z)\Be_{z}\\
	 =&v^{r}\left(r, \frac{1}{2\pi}\theta+z\right) \Be_{r}+
	 v^{\theta}\left(r, \frac{1}{2\pi}\theta+z\right)\Be_{\theta}+
	 v^{z}\left(r, \frac{1}{2\pi}\theta+z\right)\Be_{z}.
\end{split}
\end{equation}
Hence one has $u^{r}=u^{\theta}\equiv 0$ and $u^{z}\equiv C_{1}$. The similar argument to the magnetic field $\Bh$ leads to $h^{r}=h^{\theta}\equiv 0$ and $h^{z}\equiv C_{2}$.  This finishes the proof of Lemma \ref{Le:32}.
\end{proof}
Now we are ready to give a proof of  Theorem \ref{th:01}. According to the proof of Lemma \ref{Le:32}, we  need to remove the  finite Dirichlet integral assumption \eqref{21Dintass}.
\begin{proof}[Proof for  Theorem \ref{th:01}]
 Noting that the equality \eqref{muinteeq} still holds.
	The proof is almost the same as that in  Lemma \ref{Le:32}, except that
	\begin{equation}\label{eqA133}
		\begin{split}
		|J_{412}|=&\left| \int_{0}^{1}\int_{0}^{2\pi}
			\int_{R-1}^{R}\dfrac{1}{r^{2}}\partial_{\theta} u^{r}\partial_{\theta} \Psi_{R,\theta}^{r}\,drd\theta dz
			\right|\\
			\leq& CR^{-\frac{5}{2}}\|\partial_{z}u^{r}\|_{L^{2}(\OR)}\cdot R^{\frac{1}{2}}\|\nabla\Bu\|_{L^{2}(\OR)}
			\leq C\|\nabla\Bu\|^{2}_{L^{2}(\OR)}.
		\end{split}
	\end{equation}	
From the proof of Lemma \ref{Le:32}, it holds that
	\begin{equation}\label{eqA134}
		Y(R)\leq  C_{1}Y^{\prime}(R)
		+C_{2}R^{\frac{1}{2}}
		\left[Y^{\prime}(R)\right]^{\frac{1}{2}},
	\end{equation}
	where $Y(R)$ is defined in \eqref{gDinsgro}. Hence one has
	\begin{equation}\label{eqA135}
		\left[Y^{\prime}(R)\right]^{\frac{1}{2}}
		\geq \dfrac{-C_{2}R^{\frac{1}{2}}+\sqrt{C_{2}^{2}R+4C_{1}Y(R)}}{2C_{1}}
		\geq
		\dfrac{Y(R)}{\sqrt{C_{2}^{2}R+ 4C_{1}Y(R)}}.
	\end{equation}
	If $\nabla\Bu$ and $\nabla\Bh$ are not identically  zero,  $Y(R)>0$ for $R$ large enough. It follows from \eqref{eqA135} that one obtains
	\begin{equation}\label{eqA136}
		\left[C^{2}_{2}RY^{-2}(R)+4C_{1}
		Y^{-1}(R)\right]Y^{\prime}(R)\geq 1.
	\end{equation}

	Let $M$ be a large number satisfying $M^{-1}C^{2}_{2}\leq \frac{1}{4}$. According to Lemma \ref{Le:32}, there exists a constant $R_{0}>2$ such that $Y(R_{0})\geq M$, otherwise $\nabla\Bu=\nabla\Bh\equiv0$. For every $R>R_{0}$, integrating \eqref{eqA136} over $\left[R, 2R\right]$ gives
	\begin{equation}\label{eqA137}
		2R\cdot C^{2}_{2}\left[\dfrac{1}{Y(R)}
		-\dfrac{1}{Y(2R)}\right]+4C_{1}\ln \dfrac{Y(2R)}{Y(R)}\geq R.
	\end{equation}
	Since $Y(R)\geq M$, it holds that
	\begin{equation}\label{eqA138}
		\dfrac{Y(2R)}{Y(R)}\geq \text{exp}\left\{ \dfrac{R}{8C_{1}}\right\}.
	\end{equation}
	This implies the exponential growth of $\|\nabla\Bu\|_{L^{2}(\Omega_{R})}$ or $\|\nabla\Bh\|_{L^{2}(\Omega_{R})}$. However,  Lemma \ref{le:pebu} shows that $\nabla \Bu$ and $\nabla \Bh$ are bounded in
	$\Omega$. So  $\|\nabla\Bu\|_{L^{2}(\Omega_{R})}$ and $\|\nabla\Bh\|_{L^{2}(\Omega_{R})}$ are at most linearly growing in $R$. This contradiction implies $\nabla\Bu=\nabla\Bh\equiv0$. It follows from the proof of  Lemma \ref{Le:32} that the velocity field $\Bu$ and the magnetic field $\Bh$  must be  constant vectors of the form $(0, 0, C_{1})$ and $(0, 0, C_{2})$, respectively.  Hence the proof of Theorem \ref{th:01} is completed.
   \end{proof}
\begin{appendices}
	\section*{Appendix: Notes on Remark \ref{remarkreq1}}\label{proofRE1}
	
	Note that  the assumption on magnetic field $\Bh$  improved from the helical symmetry   to periodic in $z$-variable, the proof of Remark \ref{remarkreq1} has two differences  as in the   Theorem \ref{th:01}.  Specifically, (i) the Poincar\'{e} inequality \eqref{eqmagnepoin} for $h^{r}$.	Using the divergence free condition \eqref{eqdivecylinc} yields     
	\begin{equation}\label{eqA52appe}
		\begin{split}
			\partial_{r}\int_{0}^{1}\int_{0}^{2\pi}ru^{r}\,d\theta dz=-\int_{0}^{1}\partial_{\theta}u^{\theta}+\partial_{z}(ru^{z})\,dz=0,\ \ \text{for all}\ \ 0\leq r<+\infty.
		\end{split}
	\end{equation}
	This implies
 \[
\int_{0}^{1}\int_{0}^{2\pi}ru^{r}\,d\theta dz=0
	\quad \text{and}\quad \int_{0}^{1} \int_{0}^{2\pi}\int_{R-1}^{R}ru^{r}\, drd \theta dz=0.
	\]
Hence, it holds the Poincar\'{e} inequality \eqref{eqmagnepoin} for $h^{r}$
\[
	\|h^{r}\|_{L^{2}(\OR)}\leq	C\|\partial_{z}h^{r}\|_{L^{2}(\OR)}.
\]
(ii) The estimate $|J_{415}|$.  The matrix $\nabla\Bu$ in the cylindrical coordinates can be written as a form of tensor product as follows,
\[
\begin{split}
	\nabla\Bu
	= &
	\,\,\, \partial_{r}u^{r}\Be_{r}\otimes\Be_{r}+\left(\frac{1}{r}\partial_{\theta}u^{r}-\frac{u^{\theta}}{r}\right)\Be_{r}\otimes\Be_{\theta}+\partial_{z}u^{r}\Be_{r}\otimes\Be_{z}  \\
	&+\partial_{r}u^{\theta}\Be_{\theta}\otimes\Be_{r}+\left(\frac{1}{r}\partial_{\theta}u^{\theta}+\frac{u^{r}}{r}\right)\Be_{\theta}\otimes\Be_{\theta}+\partial_{z}u^{\theta}\Be_{\theta}\otimes\Be_{z} \\
	&+\partial_{r}u^{z}\Be_{z}\otimes\Be_{r}+\frac{1}{r}\partial_{\theta}u^{z}\Be_{z}\otimes\Be_{\theta}+\partial_{z}u^{z}\Be_{z}\otimes\Be_{z}.
\end{split}
\]
So one obtains
\begin{equation}\label{divfes1}
	|\partial_{r}u^{r}|+|\partial_{r}u^{\theta}|+|\partial_{r}u^{z}|
	+|r^{-1}\partial_{\theta}u^{z}|+|\partial_{z}u^{r}|+|\partial_{z}u^{\theta}|+|\partial_{z}u^{z}|\leq C|\nabla\Bu|
\end{equation}
and
\begin{equation}\label{divfes2}
	|r^{-1}\partial_{\theta}u^{r}-r^{-1}u^{\theta}|+|r^{-1}\partial_{\theta}u^{\theta}+r^{-1}u^{r}|\leq C|\nabla \Bu|.
\end{equation}
Hence, it holds that
\begin{equation}
	\begin{split}
|J_{415}|= &\left|\int_{0}^{1}\int_{0}^{2\pi}\int_{R-1}^{R}\left[\left(h^{r}\partial_{r}+\dfrac{h^{\theta}}{r}\partial_{\theta}+h^{z}\partial_{z}\right)h^{r}-\dfrac{(h^{\theta})^2}{r}\right]\Psi_{R,\theta}^{r}\, drd\theta dz\right|\\
 \leq&C\|\Bh\|_{L^{\infty}(\DR)}
 \|(\partial_{r},\partial_{z})h^{r}+r^{-1}(\partial_{\theta}h^{r}-h^{\theta})\|_{L^{2}(\DR)}\cdot \|\partial_{z}\Psi_{R,\theta}^{r}\|_{L^{2}(\DR)}\\
 \leq&C\|\Bh\|_{L^{\infty}(\OR)}\cdot
R^{-\frac{1}{2}}\|\nabla\Bh\|_{L^{2}(\OR)}\cdot R^{\frac{1}{2}}\|u^{r}\|_{L^{2}(\OR)}
\leq C
R^{\frac{1}{2}}\|\nabla\Bh\|_{L^{2}(\OR)}.
		\end{split}
\end{equation}
\end{appendices}	

	{\bf Acknowledgement.}
 The author would like to thank  Professor Chunjing Xie and Professor Yun Wang for helpful discussion    and constant encouragement.
\medskip

\end{document}